\title{A Fibonacci variant of the Rogers-Ramanujan identities via crystal energy}
\author{Shunsuke Tsuchioka}
\address{Department of Mathematical and Computing Sciences, Tokyo Institute of Technology, Tokyo 152-8551, Japan}
\email{tshun@kurims.kyoto-u.ac.jp}
\date{Sep 24, 2024}
\keywords{Fibonacci numbers,
Rogers-Ramanujan identities,
quantum groups,
Kashiwara crystals,
perfect crystals}
\subjclass[2020]{Primary~11P84, Secondary~05E10}
\newtheorem{Thm}{Theorem}[section]
\newtheorem{Prop}[Thm]{Proposition}
\newtheorem{Lem}[Thm]{Lemma}
\newcommand{\REP}[1]{(#1)^{\ast}}
\DeclareMathOperator{\CL}{\mathsf{cl}}
\DeclareMathOperator{\STR}{\mathsf{Str}}
\newcommand{\EF}{\tilde{f}}
\newcommand{\GI}{\tilde{g}}
\DeclareMathOperator{\HT}{\mathsf{ht}}
\DeclareMathOperator{\WT}{\mathsf{wt}}
\DeclareMathOperator{\AF}{\mathsf{af}}
\newcommand{\BI}[1]{{#1}}
\newcommand{\LP}{\mathcal{P}}
\newcommand{\PB}{\mathbb{B}}
\newcommand{\twoarrows}[3][0.2ex]{%
  \mathrel{\mathpalette\twoarrows@{{#1}{#2}{#3}}}%
}
\newcommand{\twoarrows@}[2]{\twoarrows@@#1#2}
\newcommand{\twoarrows@@}[4]{%
  \vcenter{\offinterlineskip\m@th
    \ialign{\hfil##\hfil\cr
      $#1#3$\cr
      \noalign{\vskip#2}
      $#1#4$\cr
    }%
  }%
}
\begin{document}
\maketitle

\begin{abstract}
We define a length function for a perfect crystal.
As an application, we derive a variant of the Rogers-Ramanujan identities, which involves (a $q$-analog of) the Fibonacci numbers.
\end{abstract}

\section{Introduction}
This paper is a continuation of ~\cite{Ts1}, where we gave a proof of the second Rogers-Ramanujan identity
via Kashiwara crystals.
The idea is summarized as follows.
\begin{quotation}
For an explicit realization $B\cong B(\lambda)$ of a highest weight $A$-crystal, find a ``length function'' $\ell:B\to\mathbb{Z}$
so that the generating function $F(x,q)=\sum_{b\in B}x^{\ell(b)}q^{|b|}$ behaves ``nicely''. 
\end{quotation}
Here, $|b|=n$ if $b=\widetilde{f}_{i_n}\cdots\widetilde{f}_{i_1}\emptyset$ for the highest weight element $\emptyset$ of $B$.

In ~\cite{Ts1},
for $A=A^{(1)}_1$ and $\lambda=3\Lambda_0$,
we adopt a connected component in the triple tensor product $B(\Lambda_0)^{\otimes 3}$ as a realization of $B(3\Lambda_0)$, where the basic crystal $B(\Lambda_0)$ is realized as the set of strict partitions $\STR$~\cite{MM}.
The value $\ell(x\otimes y\otimes z)$ of the function $\ell$ for $x\otimes y\otimes z$, where $x,y,z\in\STR$, is defined to be the sum
of the lengths of $x$, $y$ and $z$.
Then, we have (see ~\cite[Corollary 4.3]{Ts1})
\begin{align}
F(x,q)= (-xq;q)_{\infty}\sum_{s\geq 0}\frac{q^{s(s+1)}x^{2s}}{(q;q)_s},
\label{sankou}
\end{align}
where we use the usual convention for the $q$-Pochhammer symbols (see ~\cite{Ts1}).

The aim of this paper is to point out that, for a Kyoto path realization of a highest weight crystal~\cite{KKMMNN2,KKMMNN},
one can define a function $\ell_H$ (see \eqref{defH}), which we call the $H$-length, so that $F(x,q)$ satisfies a non-trivial $q$-difference
equation (Proposition \ref{mainres3}).
By applying it to $A^{(1)}_1$ Kirillov-Reshetikhin perfect crystal $B^{1,3}$ with a slight modification, we get a
variant of the Rogers-Ramanujan identities below.
\begin{Thm}\label{mainthm}
For $i=1,2$, we have
\begin{align*}
\sum_{n\geq 0}\frac{b^{(i)}_n}{(q;q)_n}=\frac{1}{(q^i,q^{5-i};q^5)_{\infty}},
\end{align*}
where the numerators are defined by $b^{(i)}_{n+2}=q^{n+2}b^{(i)}_{n}-q^{n+1}b^{(i)}_{n+1}$ for $n\geq 0$, and
$b^{(i)}_0=1$, $b^{(i)}_1=q$ (resp. $b^{(i)}_1=0$) for $i=1$ (resp. $i=2$).
\end{Thm}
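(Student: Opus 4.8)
The plan is to compute the principally specialized character of a level-three integrable highest weight $A^{(1)}_1$-module in two ways, and then to reconcile the two expressions with the asserted identity. Write $\lambda_1,\lambda_2$ for the two level-three dominant integral weights $3\Lambda_0$ and $2\Lambda_0+\Lambda_1$ (the remaining weights $3\Lambda_1,\Lambda_0+2\Lambda_1$ are Dynkin involutes and produce nothing new). By the Kyoto path (KMN$^2$) realization of $B(\lambda_i)$ through the perfect crystal $B^{1,3}$, the $H$-length $\ell_H$ of \eqref{defH} agrees, up to normalization, with the energy, so that the generating function $F(x,q)=\sum_b x^{\ell_H(b)}q^{|b|}$ specializes at $x=1$ to the principally specialized character $\operatorname{ch}_q V(\lambda_i)$. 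First I would make this identification precise, fixing the ground-state path and the additive normalization that accounts for the ``slight modification'' mentioned in the introduction; the two admissible ground states are exactly what distinguish $i=1$ from $i=2$.

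Next I would feed $B^{1,3}$ into Proposition~\ref{mainres3}. This requires the explicit data of the $A^{(1)}_1$ crystal $B^{1,3}$: its four elements, the actions of $\widetilde e_i,\widetilde f_i$, and above all the local energy function $H$ on $B^{1,3}\otimes B^{1,3}$ coming from the combinatorial $R$-matrix. Granting the $q$-difference equation of Proposition~\ref{mainres3}, I would extract from it a recursion for the coefficients of $F(1,q)$ once the latter is written in the form $\sum_{n\ge 0}c_n/(q;q)_n$. The point to verify is that $c_n=b^{(i)}_n$, i.e.\ that these coefficients obey $c_{n+2}=q^{n+2}c_n-q^{n+1}c_{n+1}$ with $c_0=1$ and $c_1=q$ or $c_1=0$ according to the ground state. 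This is the computational heart of the argument.

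On the product side, the Weyl--Kac character formula together with its principal specialization (equivalently, Lepowsky's numerator formula) evaluates $\operatorname{ch}_q V(\lambda_i)$ as $1/(q^i,q^{5-i};q^5)_\infty$, the modulus $5$ arising from the level-three computation. Matching the two evaluations of $\operatorname{ch}_q V(\lambda_i)$ then gives the theorem. As an independent analytic check, one may instead aim for the purely $q$-series statement $\sum_n b^{(i)}_n/(q;q)_n=\sum_n q^{n^2+(i-1)n}/(q;q)_n$ and invoke the classical Rogers--Ramanujan identities: introducing $A(x)=\sum_n b^{(i)}_n x^n/(q;q)_n$ and applying $A(x)-A(qx)=x\sum_n b^{(i)}_{n+1}x^n/(q;q)_n$ twice, one finds that $A$ satisfies the second-order $q$-difference equation $qA(x)=(1+q-qx+q^3x^2)A(qx)-(1-qx)A(q^2x)$, whose solution at $x=1$ may then be compared with the Rogers--Ramanujan function $\sum_n x^nq^{n^2}/(q;q)_n$ evaluated at $x=1$ (for $i=1$) and $x=q$ (for $i=2$).

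The main obstacle is the middle step: deriving the precise recurrence $c_{n+2}=q^{n+2}c_n-q^{n+1}c_{n+1}$ and its two initial conditions from Proposition~\ref{mainres3} specialized to $B^{1,3}$. This hinges on a correct computation of the energy function $H$ and of the relevant ground-state path, together with the bookkeeping of the normalization, since a single sign or $q$-power error would break the match with the product. A subtlety worth flagging is that the \emph{naive} weighting by $x^n$ leads to the equation for $A$ displayed above, which is \emph{not} the Rogers--Ramanujan recursion; it is precisely the energy grading $\ell_H$, rather than the naive length, that must be used so that the resulting generating function obeys the Rogers--Ramanujan functional equation and thereby produces the product.
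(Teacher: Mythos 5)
Your main route coincides with the paper's: realize $B(3\Lambda_0)$ and $B(2\Lambda_0+\Lambda_1)$ as Kyoto paths over $B^{1,3}$, use an energy-type grading and Proposition \ref{mainres3} to manufacture a $q$-difference equation whose coefficient recursion is the one defining $b^{(i)}_n$, and obtain the product side from the principally specialized character (Weyl--Kac/Lepowsky); indeed the functional equation you display for $A(x)$ is literally the paper's \eqref{qdif2}. The problem is that what you defer as ``the computational heart'' cannot be obtained by merely ``granting'' Proposition \ref{mainres3}: that proposition asserts only the \emph{existence} of some non-trivial $q$-difference equation, and applied verbatim with the grading $D\ell_H$ it does not yield \eqref{qdif}. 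What the paper actually does is (i) replace $D\ell_H$ by the shifted length $\ell(\boldsymbol{b})=2\ell_H(\boldsymbol{b})-(3-b_1)$ (this is the ``slight modification''), (ii) compute the explicit $16\times 16$ transfer matrix for the sixteen series $J_{\boldsymbol{p}}$, (iii) run the Murray--Miller elimination by computer to get the single equation \eqref{qdif} for $J=\sum_{\boldsymbol{p}}J_{\boldsymbol{p}}$ (the Remark following the proof stresses that no elementary derivation of \eqref{qdif} is known), (iv) strip off the Euler factor, $K(x,q)=J(x,q)/(-xq;q)_\infty$, to reach \eqref{qdif2}, so that it is $K$, not the path generating function itself, whose $x$-expansion produces the numerators $b^{(i)}_n$ and whose value at $x=1$ is the sum in Theorem \ref{mainthm}, and (v) verify the initial condition $b^{(2)}_1=0$ by a separate combinatorial argument about paths with $\ell=1$. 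Steps (i), (iii), (iv), (v) are absent from your outline; in particular, without (iv) your instruction to write $F(1,q)$ ``in the form $\sum_n c_n/(q;q)_n$'' is not well defined, since a single $q$-series admits many such expansions — the coefficients only acquire meaning through the two-variable function $K$.

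Your fallback ``independent analytic check'' does not work as stated, and your closing ``subtlety'' is backwards. The function $A(x)$ satisfies \eqref{qdif2}, while the Rogers--Ramanujan function $R(x)=\sum_n x^nq^{n^2}/(q;q)_n$ satisfies the \emph{different} equation $R(x)=R(qx)+xqR(q^2x)$; solutions of two distinct $q$-difference equations cannot be ``compared'' at the isolated points $x=1$ and $x=q$ without further input — that comparison is exactly the content of Theorem \ref{mainthm}, so the check is circular. Likewise, no choice of grading makes the path generating function satisfy the Rogers--Ramanujan functional equation: the energy grading gives \eqref{qdif} for $J$ and \eqref{qdif2} for $K$. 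The product enters not through any functional equation but through the identification of $J(1,q)=\sum_{b\in B(3\Lambda_0)}q^{|b|}$ with the principally specialized character, equal to $(-q;q)_\infty/(q^2,q^3;q^5)_\infty$ by Lepowsky's numerator formula (similarly with $(q,q^4)$ for $\lambda=2\Lambda_0+\Lambda_1$), whence $K(1,q)=(q;q^2)_\infty J(1,q)$ is the desired product; this character-formula input is the one external ingredient, and it is the same one your main plan correctly invokes.
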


One easily sees that $b^{(i)}_n$ is a sign coherent polynomial of $q$ and $(-1)^{n+i} b^{(i)}_n(1)$ is a Fibonacci number for $n>2$.
For example, we have 
\begin{align*}
b^{(1)}_2 &= 0,\quad
b^{(1)}_3=q^4,\quad
b^{(1)}_4=-q^7,\quad
b^{(1)}_5=q^9(1+q^2),\\
b^{(1)}_6 &= -q^{13}(1+q+q^3),\quad
b^{(1)}_7=q^{16}(1+q^2+q^3+q^4+q^6),\\
b^{(2)}_2 &= q^2,\quad
b^{(2)}_3=-q^4,\quad
b^{(2)}_4=q^6(1+q),\quad
b^{(2)}_5=-q^9(1+q+q^2),\\
b^{(2)}_6 &= q^{12}(1+q+q^2+q^3+q^4),\quad
b^{(2)}_7=-q^{16}(1+q+2q^2+q^3+q^4+q^5+q^6).
\end{align*}


We note that some relations between the Fibonacci numbers (resp. the perfect crystals) and
the Rogers-Ramanujan identities are known~\cite{An1,Cig} (resp. ~\cite{DL,Pri,TW} and references therein).
We also note that, after submission to arXiv of the first version of this paper,
a different proof of Theorem \ref{mainthm} was obtained~\cite{JU}.

It would be interesting to unify the length function in ~\cite{Ts1} and the $H$-length (and its modification)
as well as defining other length functions
depending on one's preference on explicit realizations (e.g., see a list of realizations in ~\cite{Kas}).

\hspace{0mm}

\noindent{\bf Organization of the paper.} 
In \S\ref{hlen}, we define the $H$-length for a Kyoto path realization, and prove Proposition \ref{mainres3}.
In \S\ref{exaA}, we apply it to a particular perfect crystal, and prove Theorem \ref{mainthm}.



\section{The $H$-length}\label{hlen}
In this section, $A$ is an affine Dynkin diagram, whose
vertices form a set $I$.
The fundamental null root is
given by $\delta=\sum_{i\in I}a_i\alpha_i$, where $a_i$
is the label at $i$ (see \cite{Kac}).

Let $\PB$ be perfect crystal of level $\ell$ (see ~\cite[Definition 1.1.1]{KKMMNN})
with an energy function $H:\PB\times\PB\to\mathbb{Z}$ (see ~\cite[\S4.1]{KKMMNN2}).
For a level $\ell$ dominant integral weight $\lambda=\sum_{i\in I}k_i\Lambda_i$ (i.e., $\sum_{i\in I}a^{\vee}_ik_i=\ell$, where $a^{\vee}_i$
is the colabel at $i$),
we have the ground-state path $\boldsymbol{g}=\cdots\otimes g_2\otimes g_1$ by
the condition
\begin{align*}
  \varphi(g_1)=\CL(\lambda), \textrm{ and }
  \varphi(g_{k+1})=\varepsilon(g_{k}) \textrm{ for } k\geq 1,
\end{align*}
where $\CL(\lambda)=\sum_{i\in I}k_i\overline{\Lambda_i}$ and $\varphi(b)=\sum_{i\in I}\varphi_i(b)\overline{\Lambda_i}$, $\varepsilon(b)=\sum_{i\in I}\varepsilon_i(b)\overline{\Lambda_i}$ for $b\in\PB$ in $P_{\CL}$.
For a positive integer $d$, 
we say that $\boldsymbol{g}$ is $d$-periodic 
if we have $g_{k+d}=g_k$ for $k\geq 1$.
One can define an $A$-crystal structure on the set
\begin{align*}
  \LP(\lambda) = \{\cdots\otimes b_2\otimes b_1\in\PB^{\otimes\infty}\mid 
  \textrm{$b_k\ne g_k$ holds only for finitely many $k$}
  \}
\end{align*}
of $\lambda$-paths so that we have an $A$-crystal isomorphism $\LP(\lambda)\cong B(\lambda)$~\cite[Proposition 4.6.4]{KKMMNN2}.
We define the $H$-length $\ell_H(\boldsymbol{b})$ of a $\lambda$-path $\boldsymbol{b}=\cdots\otimes b_2\otimes b_1$ by
\begin{align}
\ell_H(\boldsymbol{b})=\sum_{k\geq 1}(H(b_{k+1},b_k)-H(g_{k+1},g_k)).
\label{defH}
\end{align}

For a linear combination $y=\sum_{i\in I}y_i\alpha_i$, we define $\HT(y)=\sum_{i\in I}y_i$.

\begin{Lem}\label{fg}
Assume that the ground-state path $\boldsymbol{g}$ is $d$-periodic. 
There exist functions $f,g:\PB^d\times\PB^d\to\mathbb{Z}$ such that
\begin{align*}
\ell_H(\boldsymbol{b}\boldsymbol{p}) = \ell_H(\boldsymbol{b}) + f(\boldsymbol{q},\boldsymbol{p}),\quad
|\boldsymbol{b}\boldsymbol{p}| = |\boldsymbol{b}| + d\HT(\delta)\ell_H(\boldsymbol{b}) + g(\boldsymbol{q},\boldsymbol{p})
\end{align*}
for $\boldsymbol{p}=(p_d,\dots,p_1),\boldsymbol{q}=(q_d,\dots,q_1)\in\PB^d$ and $\boldsymbol{b}=\cdots\otimes b_2\otimes b_1\in\LP_{\boldsymbol{q}}(\lambda)$.
Here, $\boldsymbol{b}\boldsymbol{p}$ stands for the concatenation $\cdots\otimes b_2\otimes b_1\otimes p_d\otimes\cdots\otimes p_1$, and
\begin{align*}
  \LP_{\boldsymbol{q}}(\lambda) =\{\cdots\otimes b_2\otimes b_1\in\LP(\lambda)\mid\textrm{$b_k= q_k$ for $1\leq k\leq d$}\}
\end{align*}
is the set of $\lambda$-paths which begin with $\boldsymbol{q}$.
\end{Lem}

\begin{proof}
Let $p_{d+1}=q_1$. We show 
\begin{equation}
  \begin{split}
  f(\boldsymbol{q},\boldsymbol{p}) &=
  \sum_{k=1}^{d}(H(p_{k+1},p_k)-H(g_{k+1},g_k)),\\
  g(\boldsymbol{q},\boldsymbol{p}) &=
  \HT(\delta)\sum_{k=1}^{d}k(H(p_{k+1},p_k)-H(g_{k+1},g_k))
  -\sum_{k=1}^{d}\HT(\AF(\WT(p_k)-\WT(g_k))).
\end{split}
  \label{fg2}
\end{equation}
For $f$, the equality is obvious. 
For $g$, it follows from a formula~\cite[pp.503]{KKMMNN}
\begin{align*}
\WT(\boldsymbol{c})=\lambda+\sum_{k\geq 1}\AF(\WT(c_k)-\WT(g_k))-\delta\sum_{k\geq 1}k(H(c_{k+1},c_k)-H(g_{k+1},g_k))
\end{align*}
for $\boldsymbol{c}=\cdots\otimes c_2\otimes c_1\in \LP(\lambda)$, and
$|\boldsymbol{c}|=\HT(\lambda-\WT(\boldsymbol{c}))$. 
We remark that each of $f(\boldsymbol{q},\boldsymbol{p})$ and $g(\boldsymbol{q},\boldsymbol{p})$ depends only on $q_1$ and $\boldsymbol{p}$.
\end{proof}

\begin{Prop}\label{mainres3}
Assume that the ground-state path $\boldsymbol{g}$ is $d$-periodic. 
For a divisor $D$ of $d\HT(\delta)$ and a function $h:\PB^d\to\mathbb{Z}$, we define a function $\ell:\LP(\lambda)\to\mathbb{Z}$ by
\begin{align*}
  \ell(\boldsymbol{b})=D\ell_H(\boldsymbol{b})+h(\boldsymbol{q})
\end{align*}
for $\boldsymbol{b}\in \LP_{\boldsymbol{q}}(\lambda)$, where $\boldsymbol{q}\in\PB^d$.
The generating function
\begin{align*}
  F(x,q)=\sum_{\boldsymbol{b}\in\LP(\lambda)}x^{\ell(\boldsymbol{b})}q^{|\boldsymbol{b}|}
\end{align*}
satisfies a non-trivial $q$-difference equation. 
\end{Prop}

\begin{proof}
We define functions $\EF,\GI:\PB^d\times\PB^d\to\mathbb{Z}$ by
\begin{align}
  \EF(\boldsymbol{q},\boldsymbol{p}) =
  Df(\boldsymbol{q},\boldsymbol{p}) +h(\boldsymbol{p})-h(\boldsymbol{q}),\quad
  \GI(\boldsymbol{q},\boldsymbol{p}) =
  g(\boldsymbol{q},\boldsymbol{p}) - \frac{d\HT(\delta)}{D}h(\boldsymbol{q}).
\label{ffgg}
\end{align}
For $\boldsymbol{p},\boldsymbol{q}\in\PB^d$ and $\boldsymbol{b}\in\LP_{\boldsymbol{q}}(\lambda)$, it is easy to see, by Lemma \ref{fg}, that we have 
\begin{align}
\ell(\boldsymbol{b}\boldsymbol{p}) = \ell(\boldsymbol{b}) + \EF(\boldsymbol{q},\boldsymbol{p}),\quad
|\boldsymbol{b}\boldsymbol{p}| = |\boldsymbol{b}| + \frac{d\HT(\delta)}{D}\ell(\boldsymbol{b}) + \GI(\boldsymbol{q},\boldsymbol{p}).
\label{tildever}
\end{align}
We denote the generating function of $\LP_{\boldsymbol{p}}(\lambda)$
by $F_{\boldsymbol{p}}(x,q)$.
The formula \eqref{tildever}
implies
\begin{align*}
F_{\boldsymbol{p}}(x,q)
= \sum_{\boldsymbol{q}\in\PB^d}\sum_{\boldsymbol{b}\in\LP_{\boldsymbol{q}}(\lambda)}x^{\ell(\boldsymbol{b}\boldsymbol{p})}q^{|\boldsymbol{b}\boldsymbol{p}|}
= \sum_{\boldsymbol{q}\in\PB^d}x^{\EF(\boldsymbol{q},\boldsymbol{p})}q^{\GI(\boldsymbol{q},\boldsymbol{p})}F_{\boldsymbol{q}}(xq^{d\HT(\delta)/D},q).
\end{align*}
Because $(F_{\boldsymbol{p}}(x,q))_{\boldsymbol{p}\in\PB^d}$ satisfies a (non-trivial) simultaneous $q$-difference equation
\begin{align}
(F_{\boldsymbol{p}}(x,q))_{\boldsymbol{p}\in\PB^d}=
M\cdot
(F_{\boldsymbol{q}}(xq^{d\HT(\delta)/D},q))_{\boldsymbol{q}\in\PB^d},
\label{simu}
\end{align}
where $M=(x^{\EF(\boldsymbol{q},\boldsymbol{p})}q^{\GI(\boldsymbol{q},\boldsymbol{p})})_{\boldsymbol{p},\boldsymbol{q}\in\PB^d}$,
each $F_{\boldsymbol{p}}(x,q)$
satisfies a $q$-difference equation, which is obtained by the Murray-Miller algorithm (see ~\cite[Appendix B]{TT}).
Thus, the sum $F(x,q)=\sum_{\boldsymbol{p}\in\PB^d}F_{\boldsymbol{p}}(x,q)$ satisfies a $q$-difference equation (see ~\cite{GL,Kau,KK}).
\end{proof}

\section{A proof of Theorem \ref{mainthm}}\label{exaA}
We apply Proposition \ref{mainres3} to $A^{(1)}_1$ Kirillov-Reshetikhin perfect crystal $B^{1,3}$, whose crystal
graph is depicted as
\begin{align*}
\BI{0} \twoarrows{\boldsymbol{\longrightarrow}}{\longleftarrow}
\BI{1} \twoarrows{\boldsymbol{\longrightarrow}}{\longleftarrow}
\BI{2} \twoarrows{\boldsymbol{\longrightarrow}}{\longleftarrow}
\BI{3},
\end{align*}
where a thick (resp. thin) arrow is an $1$-arrow (resp. $0$-arrow).
We have $\delta=\alpha_0+\alpha_1$, and we may take $H(\BI{a},\BI{b})=\max(a-3,-b)$.
For $i=2$, we take $\lambda=3\Lambda_0$. The ground-state path is given by
$\boldsymbol{g}(=\cdots\otimes g_2\otimes g_1)=\cdots\otimes \BI{0}\otimes \BI{3}$,
which is $2$-periodic.

For $\boldsymbol{b}\in\LP(\lambda)$, we define
\begin{align*}
  \ell(\boldsymbol{b})=2\ell_H(\boldsymbol{b})-(3-b_1).
\end{align*}
It is not difficult to see that $\ell(\boldsymbol{b})$
is non-negative by a case-by-case analysis of
\begin{align*}
  (H(r,q)-H(3,0))+(H(q,p)-H(0,3)),
\end{align*}
which takes values in $\{0,1,2,3\}$ for $0\leq p,q,r\leq 3$.

As an instantiation of \eqref{fg2},
we have
\begin{align*}
  f((q_2,q_1),(p_2,p_1)) &= (H(q_1,p_2)-H(g_3,g_2))+(H(p_2,p_1)-H(g_2,g_1)),\\
  g((q_2,q_1),(p_2,p_1)) &= (p_2-g_2)+(p_1-g_1)+2((H(p_2,p_1)-H(g_2,g_1))+2(H(q_1,p_2)-H(g_3,g_2)))
\end{align*}
because of $\AF(\WT(a))=(2a-3)(\Lambda_0-\Lambda_1)$
and $\alpha_1=-2\Lambda_0+2\Lambda_1$,
which imply $\AF(\WT(a)-\WT(b))=-(a-b)\alpha_1$, where $0\leq a,b\leq 3$.

By \eqref{ffgg},
the $16\times 16$ matrix $M$ in \eqref{simu} is given as 
\begin{align*}
\begin{pmatrix}
\REP{x^6q^9} & \REP{x^5q^7} & \REP{x^4q^5} & \REP{x^3q^3} \\
\REP{x^4q^6} & \REP{x^3q^4} & \REP{x^2q^2} & \REP{x^3q^4} \\
\REP{x^2q^3} & \REP{xq} & \REP{x^2q^3} & \REP{x^3q^5} \\
\REP{1} & \REP{xq^2} & \REP{x^2q^4} & \REP{x^3q^6}  \\
\REP{x^5q^8} & \REP{x^4q^6} & \REP{x^3q^4} & \REP{x^2q^2}  \\ 
\REP{x^3q^5} & \REP{x^2q^3} & \REP{xq} & \REP{x^2q^3}   \\
\REP{xq^2} & \REP{1} & \REP{xq^2} & \REP{x^2q^4}  \\
\REP{xq} & \REP{x^2q^3} & \REP{x^3q^5} & \REP{x^4q^7} \\ 
\REP{x^4q^7} & \REP{x^3q^5} & \REP{x^2q^3} & \REP{xq}  \\
\REP{x^2q^4} & \REP{xq^2} & \REP{1} & \REP{xq^2}  \\
\REP{x^2q^3} & \REP{xq} & \REP{x^2q^3} & \REP{x^3q^5} \\ 
\REP{x^2q^2} & \REP{x^3q^4} & \REP{x^4q^6} & \REP{x^5q^8}  \\ 
\REP{x^3q^6} & \REP{x^2q^4} & \REP{xq^2} & \REP{1}  \\
\REP{x^3q^5} & \REP{x^2q^3} & \REP{xq} & \REP{x^2q^3} \\ 
\REP{x^3q^4} & \REP{x^2q^2} & \REP{x^3q^4} & \REP{x^4q^6} \\
\REP{x^3q^3} & \REP{x^4q^5} & \REP{x^5q^7} & \REP{x^6q^9} 
\end{pmatrix},
\end{align*}
where $\REP{z}$ stands for the four repetitions ``$z$ $z$ $z$ $z$'' of $z$, and $(\BI{a}\otimes\BI{b}=)(\BI{a},\BI{b})\in\PB^2$ corresponds to the index $1+4b+a$ for $0\leq a,b\leq 3$.

By these data and by computer calculation using the methods mentioned
in the proof of Proposition \ref{mainres3}, we get
\begin{align}
qF(x,q) 
= (1+xq)(1+q-xq+x^2q^3)F(xq,q)
-(1+xq^2)(1-x^2q^2)F(xq^{2},q).
\label{qdif}
\end{align}

A standard back-and-forth calculation proves Theorem \ref{mainthm} for $i=2$.
In fact, for $K(x,q)=\sum_{n\in\mathbb{Z}}k_n(q)x^n=F(x,q)/(-xq;q)_{\infty}$, we have
\begin{align*}
qK(x,q) 
= (1+q-xq+x^2q^3)K(xq,q)
-(1-xq)K(xq^{2},q).
\end{align*}
This is equivalent to the condition that, for $n\in\mathbb{Z}$, we have
\begin{align*}
qk_n=(q^n+q^{n+1})k_n-q^n k_{n-1}+q^{n+1}k_{n-2} - q^{2n}k_n + q^{2n-1}k_{n-1}.
\end{align*}
By putting $k_n=b_n^{(2)}/(q;q)_n$ for $n\geq 0$ (and $b^{(2)}_n=0$ for $n<0$), we get the recurrence relation for $b_n^{(2)}$ in Theorem \ref{mainthm} for $i=2$.

It is not difficult to prove that, for $\boldsymbol{c}=\cdots c_2\otimes c_1\in \LP(3\Lambda_0)$,
the condition $\ell(\boldsymbol{c})=1$ (resp. $\ell(\boldsymbol{c})=0$) is equivalent to the condition that there exists a positive integer $N$ such that $c_1=2$, $c_2=1$, $c_3=2$, $c_4=1$, ... and $c_{m}=g_m$ for $m>N$ (and then we have $|\boldsymbol{c}|=N$) (resp. $\boldsymbol{c}=\boldsymbol{g}$).
This implies $b_1^{(2)}=0$ (resp. $b_0^{(2)}=1$).

As in ~\cite[\S4]{Ts1}, we have $K(1,q)=1/(q^2,q^3;q^5)_{\infty}$, which is equal to $\sum_{n\geq 0}b_n^{(2)}/(q;q)_n$.
This completes a proof for $i=2$.
We omit a proof for $i=1$ because it is similar (take $\lambda=2\Lambda_0+\Lambda_1$
and define $\ell(\boldsymbol{b})=2\ell_H(\boldsymbol{b})-(2-b_1)$ for $\boldsymbol{b}\in\LP(\lambda)$).



\hspace{0mm}

\noindent{\bf Acknowledgements.}
The author was supported by the Research Institute for Mathematical
Sciences, an International Joint Usage/Research Center located in Kyoto
University, the TSUBAME3.0 supercomputer at Tokyo Institute of Technology,
JSPS Kakenhi Grant 20K03506, 23K03051, Inamori Foundation, JST CREST Grant Number JPMJCR2113, Japan and Leading Initiative for Excellent Young Researchers, MEXT, Japan.

\end{document}